\documentclass[11pt,reqno]{article}

\usepackage{booktabs} 
\usepackage{array} 
\usepackage{paralist} 
\usepackage{verbatim} 
\usepackage{amsthm}
\usepackage[table,xcdraw]{xcolor}
\parindent=1em


\usepackage{latexsym, amsmath, amssymb, a4, epsfig, color}
\usepackage{blindtext}
\usepackage{graphicx}
\usepackage{subcaption}
\usepackage[utf8]{inputenc}
\usepackage[export]{adjustbox}
\usepackage{wrapfig}
\usepackage{apptools}

\usepackage{floatrow}

\usepackage{amssymb}
\usepackage{amsmath}
\usepackage[normalem]{ulem} 

\AtAppendix{\counterwithin{definition}{subsection}}
\AtAppendix{\counterwithin{theorem}{subsection}}

\graphicspath{}

\newtheorem{theorem}{Theorem}[section]

\newtheorem{lemma}[theorem]{Lemma}
\newtheorem{prop}[theorem]{Proposition}

\newtheorem{remark}{Remark}

\setlength{\textwidth}{160mm} \setlength{\textheight}{220mm}
\setlength{\oddsidemargin}{0mm} \setlength{\evensidemargin}{0mm} \setlength{\topmargin}{-15mm}


\newcommand{\CC}{\mathbb{C}}
\newcommand{\NN}{\mathbb{N}}

\newcommand{\Om}{\Omega}
\newcommand{\ds}{\displaystyle}

\newcommand{\p}{\partial}

\newcommand{\eqnref}[1]{(\ref {#1})}
\renewcommand{\qed}{\hfill $\Box$ \medskip}
\newcommand{\beq}{\begin{equation}}
\newcommand{\eeq}{\end{equation}}

\newcommand{\SingleOmega}{\mathcal{S}_{\partial\Omega}}

\newcommand{\KstarOmega}{\mathcal{K}_{\partial\Omega}^{*}}

\newcommand{\Kcal}{\mathcal{K}}

\newcommand{\LtwobdOmega}{L^2(\partial\Omega)}
\newcommand{\LtwozerobdOmega}{L_0^2(\partial\Omega)}


\numberwithin{equation}{section}
\numberwithin{figure}{section}

\begin{document}

\title{A decay estimate for the eigenvalues of the Neumann-Poincar\'{e} operator using the Grunsky coefficients
 \thanks{
 \footnotesize
 This work is supported by the Korean Ministry of Science, ICT and Future Planning through NRF grant NRF-2016R1A2B4014530 (to M.L and Y.J).}
}
\author{
YoungHoon Jung\thanks{\footnotesize Department of Mathematical Sciences, Korea Advanced Institute of Science and Technology, Daejeon 305-701, Korea (hapy1010@kaist.ac.kr.)}  \and
Mikyoung Lim\thanks{\footnotesize Department of Mathematical Sciences, Korea Advanced Institute of Science and Technology, Daejeon
305-701, Korea (mklim@kaist.ac.kr).} }

\date{\today}
\maketitle
\begin{abstract}
\noindent We investigate the decay property of the eigenvalues of the Neumann-Poincar\'{e} operator in two dimensions. As is well-known, this operator admits only a sequence of eigenvalues that accumulates to zero as its spectrum for a bounded domain having $C^{1,\alpha}$ boundary with $\alpha\in (0,1)$. In this paper, we  show that the eigenvalue $\lambda_k$'s of the Neumann-Poincar\'{e} operator ordered by size satisfy that $|\lambda_k| = O(k^{-p-\alpha+1/2})$ for an arbitrary simply connected domain having $C^{1+p,\alpha}$ boundary with $p\geq 0,~ \alpha\in(0,1)$ and $p+\alpha>\frac{1}{2}$.

\end{abstract}

\noindent {\footnotesize {\bf AMS subject classifications.} {35J05 ; 30C35; 	35P15} } 

\noindent {\footnotesize {\bf Key words.} 
{Neumann-Poincar\'{e} operator; Riemann mapping; Spectral properties; Grunsky coefficients}
}

\section{Introduction}
In this paper, we investigate the decay property of the eigenvalues of the Neumann-Poincar\'{e} (NP) operator based on the series expansion of the NP operator that recently appeared in \cite{jung2018new}. The NP operator is a boundary integral operator that naturally shows up when one solves a transmission problem in electrostatics via the boundary integral formulation. Recently, the spectral analysis of the NP operator has drawn much attention because of its applications to nanophotonics and metamaterials \cite{ammari2013spectral,ando2016plasmon,bonnetier2012pointwise,bonnetier2013spectrum,grieser2014plasmonic,mayergoyz2005electrostatic,milton2006cloaking}. For examples, it was shown that plasmon resonance takes place at the eigenvalues of the NP operator \cite{grieser2014plasmonic} and that cloaking by anomalous localized resonance occurs at the accumulation point of the eigenvalues \cite{ando2016plasmon}.

For a simply connected bounded Lipschitz domain $\Omega\subset\mathbb{R}^2$ and a density function $\varphi\in L^2(\partial\Omega),$ the NP operator is defined by
\begin{equation}\label{eqn:Kstar}
	\KstarOmega[\varphi](x)=p.v.\frac{1}{2\pi}\int_{\partial\Omega}\frac{\left<x-y,\nu_x\right>}{|x-y|^2}\varphi(y)\,d\sigma(y).
\end{equation}
Here, $p.v$ denotes the Cauchy principal value and $\nu_x$ is the outward unit normal vector at $x\in\partial\Omega$. 
It is worth remarking that one can generalize the NP operator to be defined for a multiply connected domain \cite{ammari2004reconstruction}.

While $\KstarOmega$ is symmetric on $\LtwobdOmega$ only for a disk or a ball \cite{lim2001symmetry}, it can be realized as a symmetric operator on $H_0^{-1/2}(\partial\Omega)$ by defining a new inner product based on Plemelj's symmetrization principle \cite{khavinson2007poincare}. The space $H_0^{-1/2}(\partial\Omega)$ denotes the Sobolev space $H^{-1/2}(\partial\Omega)$ with the mean-zero condition. 
Hence, the spectrum of the NP operator on $\LtwozerobdOmega$ or $H_{0}^{-1/2}(\partial\Omega)$ lies on the real axis in the complex plane. More specifically, it is contained in $(-1/2,1/2)$ \cite{kellogg2012foundations,verchota1984layer} (see also \cite{kang2018spectral,krein1998compact}).
If $\Om$ has $C^{1,\alpha}$ boundary with some $\alpha\in(0,1)$, then $\KstarOmega$ is compact and, hence, it admits only a sequence of eigenvalues that accumulates to zero as its spectrum. For simple shapes such as disks or ellipses, the complete sets of eigenvalues are known; see for example \cite{ammari2007boundary}. If $\Om$ is merely a Lipschitz domain, then the corresponding $\KstarOmega$ admits the continuous spectrum as well as the eigenvalues. Various studies are underway to investigate the spectral properties of the NP operator for cornered domains; see for examples \cite{helsing2017classification,kang2017spectral,li2018embedded,perfekt2014spectral,perfekt2017essential}.
 
As already mentioned, the purpose of this paper is to find the decay rate of the eigenvalues of the NP operator $\Kcal^*_{\p\Om}$. Before stating our main theorem, we wish to mention two related results. As the set of eigenvalues of the NP operator in two dimensions is symmetric with respect to $0$ (see \cite{blumenfeld1914poincare}), we can let $\lambda_k$'s denote the eigenvalues ordered as in \eqnref{eqn:eigenvalueorder}. 
In \cite{ando2016exponential}, it was shown that
$$|\lambda_k|=O(e^{-k\epsilon}) \quad\mbox{for } \p\Om\mbox{ analytic}$$
with a constant $\epsilon>0$ depending on $\Om$. 
It also holds (see \cite{miyanishi2017eigenvalues}) that 
 \beq\label{rate:miyanish}
 |\lambda_k|=o(k^\beta)\quad \mbox{for }\p\Om \mbox{ of class } C^p,\ p\geq2,
 \eeq
with any $\beta>-p+3/2.$

The following is the main theorem of this paper. One can find the proof in section \ref{sec:proof}.
\begin{theorem}\label{theorem:main}
	Let $\Omega$ be a simply connected bounded domain having $C^{1+p,\alpha}$ boundary with $p\geq 0$, $\alpha\in(0,1)$ and $p+\alpha>1/2.$ Let $\{\lambda_k\}_{k=1}^\infty$ be the eigenvalus of the NP operator $\KstarOmega$ on $H^{-1/2}_0(\partial\Omega)$ enumerated in the following way:
	\begin{equation}\label{eqn:eigenvalueorder}
		0.5>\left|\lambda_1\right|=\left|\lambda_2\right|\geq\left|\lambda_3\right|=\left|\lambda_4\right|\geq\cdots.
	\end{equation}
	Then there exists a constant $C>0$ independent of $k$ such that 
	\begin{equation*}
		\left|\lambda_{2k-1}\right|=\left|\lambda_{2k}\right| \leq C k^{-p-\alpha+1/2}\quad\mbox{for all } k=1,2,3,\dots.
	\end{equation*}
\end{theorem}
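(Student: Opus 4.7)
The plan is to combine the Grunsky--coefficient series representation of $\mathcal{K}^*_{\partial\Omega}$ developed in \cite{jung2018new} with the Kellogg--Warschawski regularity of the Riemann map, and then extract the eigenvalue decay via a low--rank truncation of the Grunsky matrix.

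Concretely, let $\Psi:\{|w|>1\}\to\mathbb{C}\setminus\overline{\Omega}$ denote the exterior Riemann map, normalized at infinity, and let $\{c_{mn}\}_{m,n\geq 1}$ be its Grunsky coefficients, defined by
$$\log\frac{\Psi(z)-\Psi(w)}{z-w}=-\sum_{m,n\geq 1} c_{mn}\, z^{-m}w^{-n},\qquad |z|,|w|>1.$$
The representation of \cite{jung2018new} provides an orthonormal basis of $H_0^{-1/2}(\partial\Omega)$ in which $\mathcal{K}^*_{\partial\Omega}$ becomes unitarily equivalent, modulo the $\pm$ symmetry of its spectrum, to the Grunsky operator $G=(\sqrt{mn}\,c_{mn})_{m,n\geq 1}$. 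Since the NP spectrum in two dimensions is symmetric about zero, the positive eigenvalues of $\mathcal{K}^*_{\partial\Omega}$ coincide with the singular values of $G$, and the pairing $|\lambda_{2k-1}|=|\lambda_{2k}|=\sigma_k(G)$ reduces the problem to showing $\sigma_k(G)\leq C k^{-p-\alpha+1/2}$.

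The first (and technically most delicate) step converts the boundary regularity of $\Omega$ into pointwise decay of the entries of $G$. By the Kellogg--Warschawski theorem, the hypothesis $\partial\Omega\in C^{1+p,\alpha}$ implies that $\Psi$ extends continuously to $\{|w|\geq 1\}$ with $\Psi'\in C^{p,\alpha}$ on the unit circle and $\Psi'\neq 0$ there. Consequently the kernel
$$F(\theta,\varphi)=\log\frac{\Psi(e^{i\theta})-\Psi(e^{i\varphi})}{e^{i\theta}-e^{i\varphi}}$$
has a removable singularity on the diagonal $\theta=\varphi$ (its value there being, up to constants, $\log\Psi'(e^{i\theta})$) and extends to an element of $C^{p,\alpha}(\mathbb{T}^2)$. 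Since $c_{mn}$ is, up to sign, the $(m,n)$th Fourier coefficient of $F$, integrating by parts $p$ times in each variable and applying the classical Hölder--Fourier estimate produces, after renormalization, a pointwise bound of the form
$$|G_{mn}|=|\sqrt{mn}\,c_{mn}|\leq \frac{C}{(mn)^{p+\alpha}}.$$

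The second step is a Weyl--Schmidt argument. Let $G_N$ denote the $N\times N$ upper-left truncation of $G$, so $\mathrm{rank}(G_N)\leq N$. The entrywise bound above gives
$$\|G-G_N\|_{\mathrm{HS}}^2=\sum_{\max(m,n)>N}|G_{mn}|^2\leq C\!\!\sum_{\max(m,n)>N}\!\!(mn)^{-2(p+\alpha)}\leq C'\,N^{1-2(p+\alpha)},$$
where the summability uses exactly $p+\alpha>1/2$. Hence $\|G-G_N\|\leq C''N^{-p-\alpha+1/2}$, and the min--max principle yields $\sigma_{N+1}(G)\leq \|G-G_N\|\leq C''N^{-p-\alpha+1/2}$. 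Combined with the pairing of eigenvalues above, this yields the claimed estimate after relabeling $k=N+1$.

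The main obstacle I expect is the Grunsky decay estimate in the first step. The logarithmic diagonal behavior of $F$ is subtle: one must first expand $\Psi(z)-\Psi(w)=\Psi'(w)(z-w)+O(|z-w|^{1+\alpha})$ to confirm that the difference quotient $(\Psi(z)-\Psi(w))/(z-w)$ stays bounded away from zero on $\mathbb{T}^2$, then verify that $F$ inherits full $C^{p,\alpha}$ joint regularity, and only then legitimately differentiate $p$ times in each circle variable and invoke the Hölder--Fourier estimate. The hypothesis $p+\alpha>1/2$ enters precisely at the point where the resulting Hilbert--Schmidt tail converges, and it is this convergence threshold that produces the exponent $-p-\alpha+1/2$ in the final eigenvalue bound.
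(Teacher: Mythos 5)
Your overall architecture is the same as the paper's: represent $\KstarOmega$ on the basis $\{\zeta_{\pm m}\}$ as an off-diagonal block matrix built from the normalized Grunsky coefficients $\mu_{m,k}/\gamma^{m+k}$ (Proposition \ref{thm:series}), truncate to a finite-rank operator, bound the Hilbert--Schmidt norm of the tail, and invoke the Weyl--Courant min--max principle. The truncation, the pairing $|\lambda_{2k-1}|=|\lambda_{2k}|$, and the final summation are all in order. The gap is in your first step, the entrywise bound $|\sqrt{mn}\,c_{mn}|\leq C(mn)^{-(p+\alpha)}$. Your derivation --- ``integrating by parts $p$ times in each variable'' of the kernel $F(\theta,\varphi)=\log\bigl[(\Psi(e^{i\theta})-\Psi(e^{i\varphi}))/(e^{i\theta}-e^{i\varphi})\bigr]$ --- requires the \emph{mixed} derivative $\partial_\theta^p\partial_\varphi^p F$ to exist and be H\"older continuous, i.e.\ $2p$ derivatives of dominating mixed smoothness. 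Joint membership $F\in C^{p,\alpha}(\mathbb{T}^2)$, which is all that Kellogg--Warschawski delivers from $\Psi'\in C^{p,\alpha}$, yields only the Fourier decay $|\hat F(m,n)|\leq C\max(m,n)^{-(p+\alpha)}$ (apply the one-variable H\"older--Fourier estimate in $\theta$ uniformly in $\varphi$, and symmetrize); it does not give product decay, and the obstruction is real because the loss of regularity of $F$ is concentrated on the diagonal $\theta=\varphi$, where no extra mixed smoothness is available. This is fatal for your second step as written: with the honest bound $|G_{mn}|\leq C\sqrt{mn}\max(m,n)^{-(p+\alpha)}$ the tail sum $\sum_{\max(m,n)>N}|G_{mn}|^2$ behaves like $\sum_{k>N}k^{3-2(p+\alpha)}$ and diverges unless $p+\alpha>2$.

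What rescues the argument is not a pointwise bound but a \emph{row-sum} bound, and this is exactly what the paper imports from Suetin (Lemma \ref{lemma:grunskysumbound}): for $\partial\Omega\in C^{1+p,\alpha}$,
\begin{equation*}
\sum_{k=1}^{\infty}\left|\frac{\mu_{m,k}}{\gamma^{m+k}}\right|^2
=\sum_{k=1}^{\infty}\frac{c_{k,m}\,\overline{c_{m,k}}}{\gamma^{2(m+k)}}
\leq \frac{M}{m^{2(p+\alpha)}},
\end{equation*}
where the first equality uses the Grunsky identity $mc_{k,m}=kc_{m,k}$. Summing this over $m>N$ gives $\|\KstarOmega-P_N\KstarOmega\|_{\mathrm{HS}}^2\leq CN^{1-2(p+\alpha)}$ directly, with convergence at exactly the threshold $p+\alpha>1/2$, and the rest of your argument then goes through verbatim. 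Note that even this lemma only implies the entrywise estimate $|\mu_{m,k}/\gamma^{m+k}|\leq C(mk)^{-(p+\alpha)/2}$ (by combining the $m$-row and $k$-row bounds via symmetry), which is strictly weaker than the product bound you posited; so your claimed pointwise estimate is not merely unproven but stronger than what the known Grunsky-coefficient theory supplies. Replace your first step by the citation to Suetin's lemma and the proof is complete.
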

Our result generalizes the decay estimates obtained in \cite{ando2016exponential,miyanishi2017eigenvalues} up to the boundary regularity $C^{1,\alpha}$ with $\alpha>1/2$ (by setting $p=0$). Furthermore, it improves the decay rate obtained in \cite{miyanishi2017eigenvalues} for $\p\Om$ of class $C^{1+p,\alpha}$ with $\alpha>0$; while the decay rate from \eqnref{rate:miyanish} is $o(k^{\beta})$ with $\beta>-p+1/2$, our result in Theorem \ref{theorem:main} gives $O(k^{-p-\alpha+1/2})$. 

The rest of the paper is organized as follows. In section \ref{sec:pre} we review some results in geometric function theory. Section \ref{sec:series} is then devoted to review the series expansion of the NP operator that recently appeared in \cite{jung2018new}. In section \ref{sec:proof} we prove the main theorem.

\section{Preliminary}\label{sec:pre}
\subsection{The Faber polynomials and the Grunsky coefficients}
We review some results in geometric function theory. For more information, we recommend that the reader see \cite{duren2001univalent,henrici1993appliedvol3,smirnov1968functions}.

Let $\Omega$ be a simply connected bounded domain in $\mathbb{R}^2$.
According to the Riemann mapping theorem there is a unique conformal mapping $\Psi$ from $\{w\in\mathbb{C}~:~|w|>\gamma\}$ onto the exterior region $\mathbb{C}\setminus\overline{\Omega}$ whose Laurent series expansion is given by
\begin{equation}\label{eqn:extmapping}
	\Psi(w)=w+{a}_0+\frac{{a}_1}{w}+\frac{{a}_2}{w^2}+\cdots,\quad |w|>\gamma.
\end{equation}
The quantity $\gamma$ is called the logarithmic capacity of $\overline{\Omega}$. We set $$\rho_0=\ln \gamma.$$

Extension properties of the conformal mapping depending on the regularity of $\p\Om$ have been studied by various authors.
For a simply connected Jordan domain, the conformal mapping $\Psi$ can be extended continuously to $\partial\Omega$ by the Carath\'eodory extension theorem \cite{duren2001univalent, henrici1974appliedvol1}. If the domain has no cusp on its boundary, then $|\Psi'(w)|$ is integrable on $\partial\Omega$ even when $\p\Om$ has a corner point. If $\p\Om$ is $C^{m,\alpha}$ for some $m\in\NN$ and $\alpha\in(0,1)$, then the $m$-th derivative $\Psi^{(m)}$ admits an extension to $\p\Om$ which is of Lipschitz continuity of order $\alpha$ on $\overline{\CC\setminus\Om}$ by the Kellogg-Warschawski theorem.
 One can find more details on the regularity results for the conformal mapping in \cite{pommerenke2013boundary}.

The mapping $\Psi$ determines (uniquely for the domain $\Om$) the Faber polynomials $\{F_m(z)\}_{m=0}^\infty$ that are monic satisfying
\begin{equation}\label{eqn:Faberdefinition}
	F_m(\Psi(w))-w^m
	=\sum_{k=1}^{\infty}c_{m,k}{w^{-k}},\quad m=1,2,\dots.
\end{equation}
The coefficients $c_{m,k}$ are called the Grunsky coefficients.

The Faber polynomials can be determined via the recursion relation
\begin{equation}\label{eqn:Faberrecursion}
	-na_n=F_{n+1}(z)+\sum_{s=0}^{n}a_{s}F_{n-s}(z)-zF_n(z),\quad n\geq 0,
\end{equation}
with the initial condition $F_0(z)=1$. Similarly, the Grunsky coefficients can be determined by
\begin{equation}\label{eqn:cnkrecursion}
	c_{m,k+1}=c_{m+1,k}-a_{m+k}+\sum_{s=1}^{m-1}a_{m-s}c_{s,k}-\sum_{s=1}^{k-1}a_{k-s}c_{m,s},\quad m,k\geq 1,
\end{equation}
with the initial condition $c_{n,1}=na_n$ for all $n\geq1$. Here, we set $\sum_{s=1}^{0}=0$.
The following Grusnky identity is well-known:
\begin{equation}\label{eqn:Grunskyidentity}
	mc_{n,m}=nc_{m,n}\quad m,n=1,2,\cdots.
\end{equation}
Using \eqnref{eqn:Grunskyidentity}, one can symmetrizie the Grunsky coefficients as
\begin{equation}\label{eqn:symGrunskycoeff}
	\mu_{m,k} = \sqrt{\frac{k}{m}} {c_{m,k}}	
\end{equation}
so that $\mu_{m,k}=\mu_{k,m}$.
Using the polynomial area theorem \cite{duren2001univalent}, it can be easily shown that the coefficients \eqnref{eqn:symGrunskycoeff} satisfy the $l^2$-type bound
\begin{equation}\label{muineq1_2}
\sum_{k=1}^\infty\left|\frac{\mu_{m,k}}{\gamma^{m+k}}\right|^2\leq 1,
\end{equation}
where strict inequality holds unless $\Omega$ has measure zero.

\subsection{Orthogonal coordinates in $\mathbb{C}\setminus\Omega$ and function space on $\partial\Omega$}

The exterior conformal mapping $\Psi$ naturally induces an orthogonal coordinate system in the exterior region $\mathbb{C}\setminus\Omega$. We associate each $z\in \mathbb{C}\setminus\Omega$ with the modified polar coordinate $(\rho,\theta)\in[\rho_0,\infty)\times[0,2\pi)$ via the relation
$$z=\Psi(e^{\rho+i\theta}).$$
For notational convenience, we let $\Psi(\rho,\theta)$ indicate $\Psi(e^{\rho+i\theta})$. It is easy to see that the scale factors $h_\rho=|\frac{\partial \Psi}{\partial\rho}|$ and $h_\theta=|\frac{\partial \Psi}{\partial\theta}|$ coincide. We denote $$h(\rho,\theta)=h_\rho=h_\theta.$$

With this curvilinear orthogonal coordinate system, the Laplacian operator for a function $u$ becomes
\begin{equation}
	\Delta u=\frac{1}{h^2(\rho,\theta)}\left(\frac{\partial^2 u}{\partial \rho^2}+\frac{\partial^2 u}{\partial \theta^2}\right)
\end{equation}
and the exterior normal derivative on $\partial\Omega$ is
 \begin{equation}
 	\frac{\partial u}{\partial \nu}\Big|_{\partial\Omega}^{+}(z)=\frac{1}{h(\rho,\theta)}\frac{\partial }{\partial \rho}u(\Psi(e^{\rho+i\theta}))\Big|_{\rho\rightarrow\rho_0^+}.\label{eqn:normalderiv}
 \end{equation}
 Here, $\nu$ is the outward unit normal vector on $\p\Om$ and the symbol $+$ (respectively, $-$) indicates the limit from the exterior (respectively, interior) of $\Om$. 
 Thanks to the property $h_\rho=h_\theta$, we have
	\begin{equation}\label{eqn:boundaryintegral}
		\int_{\partial\Omega}\frac{\partial u}{\partial\nu}\Big|^+_{\partial\Omega}(z)\, d\sigma(z)=\int_{0}^{2\pi}\frac{\partial }{\partial \rho}u(\Psi(e^{\rho+i\theta}))\Big|_{\rho\rightarrow\rho_0^+}\, d\theta.
	\end{equation}
 We remark that $h(\rho,\theta)$ and $\frac{1}{h(\rho,\theta)}$ are integrable on $\partial\Omega$ even when $\partial\Omega$ has a corner; see \cite{jung2018new} for further details.
From now on, we write $f(\theta)=(f\circ\Psi)(\rho_0,\theta)$ for a function $f$ defined on $\partial\Omega$ for the sake of simplicity.

\section{Series expansion of the NP operator in the orthogonal curvilinear coordinates}\label{sec:series}
We briefly review the results of a recent paper \cite{jung2018new}. We let $\Om$ satisfy the same assumption in Proposition \ref{thm:series}.

\subsection{Density basis functions and the Hilbert space $K^{-1/2}$ on $\p\Om$}
In terms of the orthogonal curvilinear system $(\rho,\theta)$ associated with $\Psi$, we set a system of density functions on $\p\Om$ as
\begin{equation}
	\begin{cases}
	\ds\zeta_0(z)=\frac{1}{h(\rho_0,\theta)},\\[2mm]
	\ds\zeta_m (z)= |m|^{\frac{1}{2}}\;\frac{e^{im\theta}}{h(\rho_0,\theta)},\quad m=1,2,\dots.
\end{cases}
\end{equation}
From the regularity of the conformal mapping, one can see that $\zeta_m(z)\in L^2(\partial\Omega)$ for each $m\in\mathbb{Z}$.

We then define the Hilbert space $K^{-1/2}(\p\Om)$ as a subspace of the measurable function space on $\p\Om$ quotiented via the following equivalence relation:
two functions $\varphi_1,\varphi_2$ on $\partial\Omega$ are considered equivalent and we do not distinguish them if 
$$\frac{1}{2\pi}\int_{\partial\Omega}\varphi_1(\theta)e^{im\theta} d\sigma=\frac{1}{2\pi}\int_{\partial\Omega}\varphi_2(\theta)e^{im\theta} d\sigma\quad \text{for all } m\in\mathbb{Z}.$$
Among all functions in the equivalent class $[\widetilde{\varphi}]$ containing a density function $\widetilde{\varphi}$ defined on $\p\Om$, we take ${\varphi}$ given by
 $${\varphi}=\sum_{m\in\mathbb{Z}}b_m\zeta_m\quad\text{with}\quad b_m=\frac{1}{2\pi}\int_{\partial\Omega}\widetilde{\varphi}(\theta) e^{-im\theta}d\sigma$$
 as the representative of the class $[\widetilde{\varphi}]$.
Now, the Hilbert space $K^{-1/2}(\p\Om)$ is defined as the square integrable sequence space with the basis $\{\zeta_m\}$. In other words, we define
\begin{align*}
K^{-1/2}(\partial\Omega)&:=\left\{\varphi=\sum_{m=-\infty}^\infty b_m \zeta_m(z)\;\Big|\;\  \sum_{m=-\infty}^\infty |b_m|^2<\infty\right\}
\end{align*}
equipped with the inner product
\begin{align}\label{innerproduct_zeta}
&\Big(\sum c_m\zeta_m,\ \sum d_m \zeta_m\Big)_{-1/2}=\sum c_m \overline{d_m}.
\end{align}

For $\p\Om$ of class $C^{1,\alpha}$ with some $\alpha>0$, it holds that
\begin{align*}
K^{-1/2}(\partial\Omega)&=H^{-1/2}(\partial\Omega),
\end{align*}
and the norm $\|\cdot\|_{K^{-1/2}(\partial\Omega)}$ is equivalent
to $\|\cdot\|_{H^{-1/2}(\p\Om)}$; one can find the proof in \cite{jung2018new}. We then define $K^{-1/2}_0(\p\Om)$ as the subspace of $K^{-1/2}(\p\Om)$ with the mean zero condition, i.e., $b_0=0$. It then holds that 
\beq\label{KandH}
K^{-1/2}_0(\partial\Omega)=H^{-1/2}_0(\partial\Omega).
\eeq

\subsection{Series expansion}
The single layer potential for a function $\varphi\in\LtwobdOmega$ is defined by
\begin{equation*}
	\SingleOmega[\varphi](x) = \frac{1}{2\pi}\int_{\partial\Omega}\ln|x-y|\varphi(y)d\sigma(y),\quad x\in\mathbb{R}^2.
\end{equation*}
It satisfies the jump relations on $\p\Om$ with the NP operator $\Kcal^*_{\p\Om}$ defined in \eqnref{eqn:Kstar}: \begin{align*}\label{eqn:jumprelation}
	\SingleOmega[\varphi]\Big|^{+}(x)&=\SingleOmega[\varphi]\Big|^{-}(x)~~~~~~~~\text{a.e. }x\in\partial\Omega,\notag\\[1.5mm]
	\frac{\partial}{\partial\nu}\SingleOmega[\varphi]\Big|^{\pm}(x)&=\left(\pm\frac{1}{2}I+\KstarOmega\right)[\varphi](x)~~~~~~~~\text{a.e. }x\in\partial\Omega.
\end{align*}
The following result is essential to prove the main theorem of the paper. 
\begin{prop}[\cite{jung2018new}]\label{thm:series}
Assume that $\partial\Omega$ is a simply connected bounded domain in $\mathbb{R}^2$ whose boundary $\partial\Omega$ is a piecewise $C^{1,\alpha}$ Jordan curve, possibly with a finite number of corner points without any cusps. Then, the following holds.

\begin{itemize}
\item[\rm(a)]
We have (for $m=0$)
\beq\label{Scal_zeta0}
\SingleOmega[\zeta_0](z)=
\begin{cases}
\ln \gamma \quad &\mbox{if }z\in\overline{\Omega},\\
\ln|w|\quad&\mbox{if }z\in\CC\setminus\overline{\Omega}.
\end{cases}
\eeq 
For $m=1,2,\dots$, we have
	\begin{align}
		\SingleOmega[\zeta_m](z)&=
		\begin{cases}
			\ds-\frac{1}{2\sqrt{m}\gamma^m}F_m(z)\quad&\text{for }z\in\overline{\Omega},\\[2mm]
			\ds-\frac{1}{2\sqrt{m}\gamma^m}\left(\sum_{k=1}^{\infty}c_{m,k}e^{-k(\rho+i\theta)}+\gamma^{2m}e^{m(-\rho+i\theta)}\right)\quad &\text{for } z\in\CC\setminus\overline{\Om},
		\end{cases}\\[3mm]
		\SingleOmega[\zeta_{-m}](z)&=
		\begin{cases}
		\ds	-\frac{1}{2\sqrt{m}\gamma^{m}}\overline{F_{m}(z)}\quad&\text{for }z\in\overline{\Omega},\\[2mm]
		\ds-\frac{1}{2\sqrt{m}\gamma^m}\left(\sum_{k=1}^{\infty}\overline{c_{m,k}}e^{-k(\rho-i\theta)}+\gamma^{2m}e^{m(-\rho-i\theta)}\right)\quad &\text{for } z\in\mathbb{C}\setminus\overline{\Omega}.		
		\end{cases}
	\end{align}
	The series converges uniformly for all $(\rho,\theta)$ such that $\rho\geq\rho_1>\rho_0$.

\item[\rm(b)]
 We have (for $m=0$)
	\beq\label{eqn:Kcal:zeta0}
	\Kcal^*_{\p\Om}[\zeta_0]=\frac{1}{2}\zeta_0.
	\eeq
For $m =1,2,\cdots$
	\begin{align}\label{NP_series1}
		&\KstarOmega[{\zeta_m}](\theta)=\frac{1}{2}\sum_{k=1}^{\infty}\frac{\mu_{k,m}}{\gamma^{m+k}}\, {\zeta}_{-k}(\theta),\quad
		\KstarOmega[{\zeta}_{-m}](\theta)=\frac{1}{2}\sum_{k=1}^{\infty}\frac{\overline{\mu_{k,m}}}{\gamma^{m+k}}\, \zeta_{k}(\theta).
	\end{align}
 The infinite series converges in $K^{-1/2}(\partial\Omega)$. 
	\end{itemize}
\end{prop}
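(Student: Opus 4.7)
The plan is to use Proposition \ref{thm:series}(b) to realize $\KstarOmega$ as a block anti-diagonal Grunsky matrix in the orthonormal basis $\{\zeta_m\}$, and then to bound its singular values by finite-rank truncation combined with a regularity-driven row estimate. By \eqref{KandH}, $H^{-1/2}_0(\partial\Omega) = K^{-1/2}_0(\partial\Omega)$ with orthonormal basis $\{\zeta_m\}_{m\neq 0}$. Ordering the basis as $(\zeta_1,\zeta_2,\ldots,\zeta_{-1},\zeta_{-2},\ldots)$, formula \eqref{NP_series1} exhibits
$$ 2\KstarOmega \;\cong\; \begin{pmatrix} 0 & \overline{M}\\ M & 0 \end{pmatrix}, \qquad M_{k,m} := \frac{\mu_{k,m}}{\gamma^{k+m}}, \quad k,m\geq 1. $$
Grunsky symmetry \eqref{eqn:Grunskyidentity} makes $M$ complex-symmetric, so $\overline{M}=M^*$; squaring the block matrix yields $M^*M$ (and $MM^*$) on the diagonal, whose common spectrum is $\{\sigma_n(M)^2\}$. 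Hence the eigenvalues of $\KstarOmega$ are $\{\pm\sigma_n(M)/2\}$, and ordering by modulus gives $|\lambda_{2n-1}|=|\lambda_{2n}|=\sigma_n(M)/2$.

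To bound $\sigma_n(M)$, I truncate $M$ to its first $N$ columns: this is a rank-$N$ operator, so Eckart--Young yields
$$ \sigma_{N+1}(M)^2 \;\leq\; \|M(I-P_N)\|_{HS}^2 \;=\; \sum_{m>N} R_m, \qquad R_m := \sum_{k\geq 1}\frac{|\mu_{m,k}|^2}{\gamma^{2(m+k)}}, $$
using $\mu_{k,m}=\mu_{m,k}$. The universal bound \eqref{muineq1_2} only gives $R_m \leq 1$, so the task reduces to proving the sharpened row estimate $R_m \leq Cm^{-2(p+\alpha)}$. Granted this, the hypothesis $p+\alpha>1/2$ makes the tail sum converge at rate $N^{1-2(p+\alpha)}$, yielding $\sigma_{N+1}(M) \leq C' N^{-(p+\alpha)+1/2}$ and hence the theorem.

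For the row bound I use $|\mu_{m,k}|^2 = (k/m)|c_{m,k}|^2$ to rewrite $R_m = (m\gamma^{2m})^{-1}\|f_m\|_{\dot{H}^{1/2}(|w|=\gamma)}^2$ for the Faber remainder $f_m(w):=F_m(\Psi(w))-w^m=\sum_{k\geq 1}c_{m,k}w^{-k}$, which admits the contour representation
$$ f_m(w) \;=\; \frac{1}{2\pi i}\oint_{|\zeta|=\gamma}\frac{\zeta^m\,\Psi'(\zeta)}{\Psi(\zeta)-\Psi(w)}\,d\zeta. $$
By the Kellogg--Warschawski theorem, $\Psi\in C^{1+p,\alpha}$ up to $|w|=\gamma$. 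The plan is to split $\Psi'(\zeta)/(\Psi(\zeta)-\Psi(w))$ into a Cauchy part $(\zeta-w)^{-1}$ plus a regular remainder of inherited $C^{p,\alpha}$ smoothness; the Cauchy piece is controlled via $L^2$-boundedness of the Hilbert transform, while $p$ integrations by parts followed by the standard $C^\alpha$ oscillatory estimate extract an extra factor $m^{-(p+\alpha)}$ from the smooth remainder, yielding $\|f_m\|_{\dot{H}^{1/2}}\leq C m^{1/2-(p+\alpha)}\gamma^m$ and hence $R_m\leq Cm^{-2(p+\alpha)}$. The principal obstacle is exactly this oscillatory-integral analysis: passing the $C^{1+p,\alpha}$ regularity of $\Psi$ through the Cauchy-type singular kernel while retaining the exact $(p+\alpha)$ exponent without logarithmic loss, which is also what pins down the sharp threshold $p+\alpha>1/2$ in the hypothesis.
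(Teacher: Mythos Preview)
Your proposal does not address the stated proposition at all. Proposition~\ref{thm:series} is the series expansion of $\SingleOmega$ and $\KstarOmega$ in the basis $\{\zeta_m\}$, quoted from \cite{jung2018new}; you do not attempt to prove any of the identities in (a) or (b). Instead you \emph{assume} Proposition~\ref{thm:series}(b) and use it to attack Theorem~\ref{theorem:main} (the eigenvalue decay estimate). So as a proof of the statement you were given, the proposal is a non sequitur.

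If one reads your proposal as an attempt at Theorem~\ref{theorem:main}, then the overall architecture matches the paper's: realize $\KstarOmega$ via the Grunsky matrix, truncate to finite rank, and bound the tail by a row estimate $R_m=\sum_{k\geq 1}|\mu_{m,k}|^2\gamma^{-2(m+k)}\leq C m^{-2(p+\alpha)}$. The decisive difference is how the row estimate is obtained. The paper simply invokes Suetin's inequality (Lemma~\ref{lemma:grunskysumbound}): since the Grunsky identity gives $c_{m,k}\overline{c_{k,m}}=(k/m)|c_{m,k}|^2=|\mu_{m,k}|^2$, the case $s=r=m$ of that lemma is exactly $R_m\leq M m^{-2(p+\alpha)}$, and the proof of Proposition~\ref{prop:operatornormineq} is then a three-line computation followed by Weyl--Courant. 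You instead propose to derive the row bound from scratch through an oscillatory-integral analysis of the Faber remainder $f_m(w)=F_m(\Psi(w))-w^m$, splitting the Cauchy kernel and integrating by parts $p$ times. That program is essentially a reproof of Suetin's lemma, and you yourself flag the ``principal obstacle'' --- passing the $C^{1+p,\alpha}$ regularity through the singular kernel without logarithmic loss --- without resolving it. So even as a proof of Theorem~\ref{theorem:main}, the proposal has a genuine gap at the one nontrivial analytic step, a gap the paper closes by citation rather than by the delicate argument you sketch.
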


Note that the NP operator $\Kcal^*_{\p\Om}$ is identical to a double infinite matrix via the boundary basis functions $\{\zeta_m\}$, which is self-adjoint thanks to the fact that $\mu_{m,k}=\mu_{k,m}$. Using \eqnref{muineq1_2}, one can show (see \cite{jung2018new}) that 
$$\|\Kcal^*_{\p\Om}\|_{K^{-1/2}(\p\Om)\rightarrow K^{-1/2}(\p\Om)}\leq \frac{1}{2}.$$

\section{Proof of the main theorem}\label{sec:proof}
We investigate the decay behavior for the eigenvalues of the NP operator by using the following two lemmas. The first lemma is an inequality relation between the magnitude of the eigenvalues and the norm of the operator which is perturbed by a finite rank operator; one can find the proof in \cite{little1984eigenvalues}.
\begin{lemma}[Weyl-Courant min-max principle]\label{thm:Weyl-Courant}
	If $T$ is a compact symmetric operator on a Hilbert space, whose eigenvalues $\{\lambda_n\}_{n=1}^\infty$ are arranged as
	$$|\lambda_1| \geq |\lambda_2| \geq \dots \geq |\lambda_n|\geq\dots \rightarrow 0,$$	
	and $S$ is any operator on the Hilbert space of rank $\leq n$, then it holds that
	$$\|T-S\|\geq|\lambda_{n+1}|.$$
\end{lemma}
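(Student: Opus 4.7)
My plan is to establish this classical min-max inequality by a short dimension-counting argument combined with the spectral expansion of the compact self-adjoint operator $T$. The first ingredient is the spectral theorem for compact self-adjoint operators, which supplies an orthonormal sequence of eigenvectors $\{e_n\}_{n=1}^\infty$ satisfying $T e_n = \lambda_n e_n$; this sequence will be the backbone of the argument.

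First I would fix the integer $n$ and consider the finite-dimensional subspace $V = \operatorname{span}\{e_1, e_2, \ldots, e_{n+1}\}$, which has dimension exactly $n+1$ by the orthonormality of the $e_k$. Because $S$ has rank at most $n$, the restriction $S|_V : V \to H$ has image of dimension at most $n$, so by the rank–nullity theorem the kernel of $S|_V$ has dimension at least $1$. Hence I can select a unit vector $x \in V$ with $Sx = 0$.

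Next I would expand this vector as $x = \sum_{k=1}^{n+1} c_k e_k$ with $\sum_{k=1}^{n+1} |c_k|^2 = 1$. Using the orthonormality of $\{e_k\}$, the eigenvalue relation $Te_k = \lambda_k e_k$, and the assumed ordering $|\lambda_1| \geq \cdots \geq |\lambda_{n+1}|$, I compute
$$\|(T - S)x\|^2 = \|Tx\|^2 = \sum_{k=1}^{n+1} |c_k|^2 |\lambda_k|^2 \geq |\lambda_{n+1}|^2 \sum_{k=1}^{n+1} |c_k|^2 = |\lambda_{n+1}|^2.$$
Taking square roots and invoking the elementary bound $\|T-S\| \geq \|(T-S)x\|$ for the unit vector $x$ gives the desired inequality.

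I do not anticipate any real obstacle: the argument is purely algebraic once the spectral theorem produces the orthonormal eigenbasis, and the only subtle point is the implicit identification of ``symmetric'' with ``self-adjoint'' (which is automatic for bounded operators on a Hilbert space, so that the spectral theorem applies). The compactness of $T$ enters only to guarantee the accumulation of eigenvalues at $0$ and the existence of such an orthonormal eigenbasis; it is not otherwise used in the estimate.
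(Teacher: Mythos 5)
Your proof is correct and complete: the dimension count forcing a unit vector $x\in\operatorname{span}\{e_1,\dots,e_{n+1}\}$ with $Sx=0$, followed by the eigenbasis expansion of $\|Tx\|^2$, is exactly the classical argument. The paper itself does not prove this lemma but defers to the cited reference of Little and Reade, whose proof is essentially the one you give, so there is nothing to reconcile; the only point worth noting is that if fewer than $n+1$ nonzero eigenvalues exist then $|\lambda_{n+1}|=0$ and the inequality is trivial, so your use of the spectral theorem is unproblematic.
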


The second lemma is an estimate of the Grunsky coefficients.
\begin{lemma}{\rm(\cite[Lemma 1.5]{suetin1974polynomials})}\label{lemma:grunskysumbound}
	Let $\Omega$ be a simply connected bounded domain having $C^{1+p,\alpha}$ boundary with $p=0,1,2,\dots$ and $\alpha\in(0,1)$. Then, there exists a positive constant $M$ independent of $s,r$ such that
	\begin{equation}
		\left|\sum_{k=1}^\infty \frac{c_{s,k}\overline{c_{k,r}}}{\gamma^{s+r+2k}}\right|
		\leq
		\frac{M}{s^{p+\alpha} r^{p+\alpha}}\quad\mbox{for all }s,r\geq 1.
	\end{equation}
\end{lemma}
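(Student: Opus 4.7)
The plan is to prove the decay bound by Fourier analysis of the Grunsky generating kernel on the boundary torus, combining the smoothness of the conformal map $\Psi$ with a Parseval reduction of the sum.

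First, I would set up the relevant generating function. From the Faber identity $\sum_{m\ge 0}F_m(\Psi(\zeta))w^{-m-1}=\Psi'(w)/(\Psi(w)-\Psi(\zeta))$, subtracting the geometric series $1/(w-\zeta)=\sum_{m\ge 0}\zeta^m w^{-m-1}$ and integrating in $w$ from $w$ to $\infty$, I obtain
\[
K(w,\zeta):=\log\frac{\Psi(w)-\Psi(\zeta)}{w-\zeta}=-\sum_{m,k\ge 1}\frac{c_{m,k}}{m}\,w^{-m}\zeta^{-k},\qquad |w|,|\zeta|>\gamma.
\]
Evaluated on $|w|=|\zeta|=\gamma$, this identifies $-c_{m,k}/(m\gamma^{m+k})$ as the $(-m,-k)$-Fourier coefficient of the boundary trace of $K$ on $\mathbb{T}^2$. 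Writing $K=\log Q$ with $Q(w,\zeta)=(\Psi(w)-\Psi(\zeta))/(w-\zeta)$, a standard divided-difference argument shows that $Q$ extends jointly as a $C^{p,\alpha}$ function when $\Psi\in C^{1+p,\alpha}$, and univalence of $\Psi$ keeps $Q$ bounded away from zero; hence $K$ inherits joint $C^{p,\alpha}$ regularity on the boundary torus.

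Next I would recast the target sum using the Grunsky identity $mc_{n,m}=nc_{m,n}$, so that both indices ``play the same role'':
\[
\sum_{k\ge 1}\frac{c_{s,k}\overline{c_{k,r}}}{\gamma^{s+r+2k}}=\frac{1}{r}\sum_{k\ge 1}k\,\frac{c_{s,k}}{\gamma^{s+k}}\,\overline{\frac{c_{r,k}}{\gamma^{r+k}}}=\frac{i}{2\pi r}\int_0^{2\pi}\partial_\theta h_s(\theta)\,\overline{h_r(\theta)}\,d\theta,
\]
where $h_m(\theta):=-m\,\widehat K_\phi(-m)(\theta)=\sum_{k\ge 1}(c_{m,k}/\gamma^{m+k})e^{-ik\theta}$ is $m$ times the $(-m)$-th $\phi$-Fourier coefficient of $K$, viewed as a function of the second angular variable. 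I would then iterate integration by parts $p$ times in $\theta$, use the joint $C^{p,\alpha}$ smoothness of $K$ to control $\|\partial_\theta^{\,j} h_m\|_\infty\le C m^{1+j-p-\alpha}$ for $0\le j\le p$, and finally extract the fractional $\alpha$ decay via the H\"older modulus of the $p$-th $\theta$-derivative of $K$ using a shifted-integration (Bernstein-type) argument. A parallel treatment in the latent $\phi$-direction is intended to produce the symmetric $r^{p+\alpha}$ factor.

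The main obstacle---and the technical heart of the proof---is that direct substitution of the pointwise Fourier bound $|\widehat K(m,k)|\le C/\max(|m|,|k|)^{p+\alpha}$ into the sum fails badly: the contribution from the regime $k\le s\le r$ comes out at size $s^{3-p-\alpha}/r^{p+\alpha}$, off from the target $s^{-p-\alpha}r^{-p-\alpha}$ by a polynomial factor in $s$. The resolution must exploit the cancellations in the Parseval pairing above, rather than bound it termwise in modulus. Equivalently, one must extract from joint $C^{p,\alpha}$ regularity of $K$ the stronger anisotropic control on the partial Fourier coefficients $\widehat K_\phi(-m)(\theta)$---in both the index $m$ and the smoothness in $\theta$---that is strong enough to sum the $k$-series coherently. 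Matching exponents in the final product bound $M/(s^{p+\alpha}r^{p+\alpha})$ is what makes the proof delicate.
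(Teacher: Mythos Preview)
The paper does not prove this lemma at all; it quotes it verbatim from Suetin's monograph (Lemma~1.5 there) and uses it as a black box in the proof of Proposition~\ref{prop:operatornormineq}. So there is no in-paper argument to compare your proposal against.

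As for your proposal itself: the setup is correct and standard. The generating identity for $K(w,\zeta)=\log\bigl((\Psi(w)-\Psi(\zeta))/(w-\zeta)\bigr)$, the joint $C^{p,\alpha}$ regularity of $K$ on the boundary torus via the Kellogg--Warschawski theorem, and the Parseval rewriting
\[
\sum_{k\ge 1}\frac{c_{s,k}\overline{c_{k,r}}}{\gamma^{s+r+2k}}=\frac{i}{2\pi r}\int_0^{2\pi}\partial_\theta h_s(\theta)\,\overline{h_r(\theta)}\,d\theta
\]
are all right. The problem is that you stop exactly where the work begins, and you say so yourself in the last paragraph. That is not a proof; it is a statement of what remains to be proved.

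Concretely, your claimed bounds $\|\partial_\theta^{\,j}h_m\|_\infty\le Cm^{1+j-p-\alpha}$ are what joint $C^{p,\alpha}$ regularity actually gives (differentiating $j$ times in $\theta$ costs $j$ orders of $\phi$-smoothness, so the $\phi$-Fourier coefficient decays only like $m^{-(p-j+\alpha)}$). But plugging these into the integral after any number of integrations by parts in $\theta$ yields, at best,
\[
\frac{1}{r}\,\|\partial_\theta^{\,j+1}h_s\|_\infty\,\|\partial_\theta^{\,-j}\,h_r\|_\infty
\ \lesssim\ \frac{s^{\,2+j-p-\alpha}}{r^{\,p+\alpha+j}}\qquad(0\le j\le p),
\]
which is never $O\bigl(s^{-p-\alpha}r^{-p-\alpha}\bigr)$; at $j=0$ you are off by $s^{2}$. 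Your remark that ``a parallel treatment in the latent $\phi$-direction is intended to produce the symmetric $r^{p+\alpha}$ factor'' does not address this: the difficulty is getting both factors \emph{simultaneously}, and joint $C^{p,\alpha}$ regularity on the torus distributes the $p+\alpha$ derivatives between the two variables rather than giving $p+\alpha$ in each.

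What Suetin's argument exploits, and what your outline lacks, is the one-sided analyticity: the $h_m$ are boundary values of functions holomorphic in $\{|w|>\gamma\}$ (indeed $\gamma^{m}h_m(\theta)=F_m(\Psi(\gamma e^{i\theta}))-\gamma^{m}e^{im\theta}$), so one can deform contours to level curves $|w|=\gamma e^{\varepsilon}$ with $\varepsilon\sim 1/m$, where $K$ is genuinely smooth in each variable separately, and trade the resulting blow-up of derivatives against the gained exponential decay $e^{-m\varepsilon}$. That contour-shift mechanism is what produces the product bound $M/(s^{p+\alpha}r^{p+\alpha})$. Until you carry out such an argument (or an equivalent one), the proposal has a genuine gap at its declared ``technical heart.''
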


One can express the NP operator in terms of the Grunsky coefficients by Proposition \ref{thm:series}.
By applying the previous two lemmas to this expression, we obtain the following.
\begin{prop}\label{prop:operatornormineq}
	Let $\Omega$ be a simply connected bounded domain having $C^{1+p,\alpha}$ boundary with $p\geq 0$, $\alpha\in(0,1)$. Let $P_N$ be the orthogonal projection from $K^{-1/2}_0(\p\Om)$ onto the $2N$ dimensional subspace spanned by $\{\zeta_{\pm k}\}_{1\leq k\leq N}$. Assuming further $p+\alpha>\frac{1}{2}$, we have
	$$\left\|P_N\KstarOmega-\KstarOmega\right\|_{K^{-1/2}}\leq \frac{C}{N^{p+\alpha-1/2}}\quad\mbox{for all }N\in\NN$$
with a positive constant $C$ independent of $N$.
\end{prop}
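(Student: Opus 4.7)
The plan is to exploit the matrix representation of $\mathcal{K}^*_{\partial\Omega}$ in the basis $\{\zeta_m\}$ given by Proposition \ref{thm:series}(b), and bound the tail of this matrix using Lemma \ref{lemma:grunskysumbound}. First I would observe that $P_N\mathcal{K}^*_{\partial\Omega}-\mathcal{K}^*_{\partial\Omega} = -(I-P_N)\mathcal{K}^*_{\partial\Omega}$, so it suffices to control the norm of $(I-P_N)\mathcal{K}^*_{\partial\Omega}$. For an arbitrary $\varphi=\sum_{m\in\mathbb{Z}\setminus\{0\}}b_m\zeta_m\in K^{-1/2}_0(\partial\Omega)$ with $\|\varphi\|^2=\sum|b_m|^2$, I would plug \eqnref{NP_series1} in to write $\mathcal{K}^*_{\partial\Omega}[\varphi]$ as a series in $\{\zeta_{\pm k}\}_{k\geq 1}$ whose coefficient on $\zeta_{-k}$ is $\frac{1}{2}\sum_{m\geq 1} b_m\,\mu_{k,m}/\gamma^{m+k}$ and on $\zeta_k$ is $\frac{1}{2}\sum_{m\geq 1} b_{-m}\,\overline{\mu_{k,m}}/\gamma^{m+k}$. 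Applying $I-P_N$ then discards all terms with $|k|\leq N$, so that by the orthonormality \eqnref{innerproduct_zeta},
\begin{equation*}
\|(I-P_N)\mathcal{K}^*_{\partial\Omega}[\varphi]\|_{K^{-1/2}}^2=\tfrac{1}{4}\sum_{k>N}\Bigl|\sum_{m\geq 1}b_m\tfrac{\mu_{k,m}}{\gamma^{m+k}}\Bigr|^2+\tfrac{1}{4}\sum_{k>N}\Bigl|\sum_{m\geq 1}b_{-m}\tfrac{\overline{\mu_{k,m}}}{\gamma^{m+k}}\Bigr|^2.
\end{equation*}

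Next, I would apply the Cauchy--Schwarz inequality to the inner sum in $m$, which reduces the problem to estimating $A_k:=\sum_{m\geq 1}|\mu_{k,m}/\gamma^{m+k}|^2$ for each $k$. The key identity is that this quantity is exactly the $s=r=k$ case of Lemma \ref{lemma:grunskysumbound}: indeed, using the Grunsky identity \eqnref{eqn:Grunskyidentity} we have $c_{m,k}\overline{c_{k,m}}/\gamma^{2k+2m} = (m/k)|c_{k,m}|^2/\gamma^{2k+2m}$, and recalling \eqnref{eqn:symGrunskycoeff} this equals $|\mu_{k,m}/\gamma^{k+m}|^2$. Thus Lemma \ref{lemma:grunskysumbound} gives $A_k\leq M/k^{2(p+\alpha)}$.

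Combining these pieces, Cauchy--Schwarz and the bound on $A_k$ yield
\begin{equation*}
\|(I-P_N)\mathcal{K}^*_{\partial\Omega}[\varphi]\|_{K^{-1/2}}^2\leq \tfrac{M}{2}\|\varphi\|^2\sum_{k>N}\tfrac{1}{k^{2(p+\alpha)}}.
\end{equation*}
Finally, under the hypothesis $p+\alpha>1/2$ the exponent $2(p+\alpha)>1$, so comparing the tail sum with the integral $\int_N^\infty x^{-2(p+\alpha)}dx$ gives $\sum_{k>N}k^{-2(p+\alpha)}\leq C' N^{-(2(p+\alpha)-1)}$. Taking square roots and supremizing over $\|\varphi\|\leq 1$ produces the claimed bound $C/N^{p+\alpha-1/2}$.

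I do not anticipate a serious obstacle here: the argument is essentially a tail estimate, and all the hard analytic work is already encapsulated in Lemma \ref{lemma:grunskysumbound} and the clean matrix representation of Proposition \ref{thm:series}. The only bookkeeping care I would take is the use of the Grunsky symmetry $\mu_{m,k}=\mu_{k,m}$ so that Lemma \ref{lemma:grunskysumbound} (which naturally bounds sums over the second index of $c_{s,\cdot}$) can be applied to the sums over $m$ that arise here as sums over the first index of $\mu_{k,m}$, and the verification that $p+\alpha>1/2$ is exactly the convergence threshold for the resulting tail sum.
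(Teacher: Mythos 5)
Your proposal is correct and follows essentially the same route as the paper: expand $\varphi$ in the $\{\zeta_{\pm m}\}$ basis, use the matrix form of $\mathcal{K}^*_{\partial\Omega}$ from Proposition \ref{thm:series}(b), apply Cauchy--Schwarz to reduce to $\sum_{m}|\mu_{k,m}/\gamma^{m+k}|^2$, identify this via the Grunsky identity with the $s=r=k$ case of Lemma \ref{lemma:grunskysumbound}, and close with the tail estimate $\sum_{k>N}k^{-2(p+\alpha)}=O(N^{1-2(p+\alpha)})$. The only differences are cosmetic (constants and which of the conjugate-symmetric forms $c_{k,m}\overline{c_{m,k}}$ versus $c_{m,k}\overline{c_{k,m}}$ you write down).
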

\begin{proof}
	Let $\varphi=\sum_{m\neq 0} a_m\zeta_m\in K_0^{-1/2}(\partial\Omega)$ with 
	\beq\label{a:cond}\sum_{m\neq 0}|a_m|^2=1.\eeq
From Proposition \ref{thm:series} and the boundedness of $\Kcal^*_{\p\Om}$ on $K^{-1/2}(\p\Om)$, we have
	\begin{align}\label{eqn:Kaval}
		\KstarOmega[\varphi]=\frac{1}{2}\sum_{k=1}^{\infty}
		\left(\sum_{m=1}^{\infty}\frac{\mu_{m,k}}{\gamma^{m+k}}a_m\right)
		\zeta_{-k}
		+
		\frac{1}{2}\sum_{k=1}^{\infty}
		\left(\sum_{m=1}^{\infty}\frac{\overline{\mu_{m,k}}}{\gamma^{m+k}}a_{-m}\right)
		\zeta_{k}
	\end{align}
and
\begin{align}\label{eqn:PKaval}
		P_N\KstarOmega[\varphi]=\frac{1}{2}\sum_{k=1}^{N}
		\left(\sum_{m=1}^{\infty}\frac{\mu_{m,k}}{\gamma^{m+k}}a_m\right)
		\zeta_{-k}
		+
		\frac{1}{2}\sum_{k=1}^{N}
		\left(\sum_{m=1}^{\infty}\frac{\overline{\mu_{m,k}}}{\gamma^{m+k}}a_{-m}\right)
		\zeta_{k}.
	\end{align}

From \eqnref{a:cond}, \eqnref{eqn:Kaval} and \eqnref{eqn:PKaval}, and by applying the Cauchy-Schwarz inequality, we derive
\begin{align*}
\left\|(\KstarOmega - P_N\KstarOmega)[\varphi]\right\|_{-1/2}^2
&=
\frac{1}{4}\sum_{k=N+1}^\infty 
\left|\sum_{m=1}^{\infty}\frac{\mu_{m,k}}{\gamma^{m+k}}a_m\right|^2
+
\frac{1}{4}\sum_{k=N+1}^\infty
\left|\sum_{m=1}^{\infty}\frac{\overline{\mu_{m,k}}}{\gamma^{m+k}}a_{-m}\right|^2\nonumber\\
&\leq
\frac{1}{2}\sum_{k=N+1}^\infty 
\sum_{m=1}^{\infty}\left|\frac{\mu_{m,k}}{\gamma^{m+k}}\right|^2\\
&=\frac{1}{2}\sum_{k=N+1}^\infty 
\sum_{m=1}^{\infty}
\frac{{c_{k,m}}}{\gamma^{m+k}}\frac{\overline{c_{m,k}}}{\gamma^{m+k}}
.
\end{align*}
From Lemma \ref{lemma:grunskysumbound}, we then obtain
\begin{align*}
\|\KstarOmega - P_N\KstarOmega\|^2
&\leq
 C \sum_{k=N+1}^\infty 
\frac{1}{k^{2(p+\alpha)}}\leq C\frac{1}{N^{2(p+\alpha)-1}}.
\end{align*}
Here, the second term is convergent assuming that $p+\alpha>1/2$. Hence we complete the proof.

\end{proof}

It is worth remarking that 
by Proposition \ref{prop:operatornormineq}, $\KstarOmega$ is a strong limit of a sequence of finite rank operators whenever the boundary $\partial\Omega$ is $C^{1,\alpha}$ for $\alpha>\frac{1}{2}.$

\smallskip

\noindent\textbf{Proof of Theorem \ref{theorem:main}}
As a direct consequence of Proposition \ref{prop:operatornormineq}, equation \eqnref{KandH}, and Theorem \ref{thm:Weyl-Courant}, we prove the theorem.
\qed

\begin{remark}[An ellipse case]
	Consider the conformal mapping of an ellipse
$$\Psi(w)=w+\frac{a}{w},\quad |w|>\gamma$$
for some complex number $a$ and $\gamma$.
 The Faber polynomials associated with the ellipse are
\begin{align*}
	F_0(z)&=1\\
	F_m(z)&=\frac{1}{2^m}\left[\left(z+\sqrt{z^2-4a}\right)^m+\left(z-\sqrt{z^2-4a}\right)^m\right],\quad m=1,2,\cdots.
\end{align*}

For each $m\in\mathbb{N}$, $F_m(\Psi(w))=w^m+\frac{a^m}{w^m}$ so that the Grunsky coefficients are
\begin{equation*}
	 c_{m,k}=\begin{cases}
		a^k&\text{if }k=m,\\
		0&\text{otherwise}.
	\end{cases}
\end{equation*}
From Theorem \ref{thm:series} (c) it follows that
\begin{align}
	&\KstarOmega[{\zeta}_m](z)=\frac{1}{2}\frac{a^m}{\gamma^{2m}}{\zeta}_{-m}(z),\quad \KstarOmega[{\zeta}_{-m}](z)=\frac{1}{2}\frac{\bar{a}^m}{\gamma^{2m}}{\zeta}_{m}(z).
\end{align}
Hence, $\KstarOmega$ has the eigenvalues and the corresponding eigenfunctions
 $$\pm \frac{1}{2}\frac{|a|^m}{\gamma^{2m}},\quad \pm\left(\frac{{a}}{|a|}\right)^m{\zeta}_{-m}+{\zeta}_{m},\quad m=1,2,\dots.$$

\end{remark}


\end{document}